\newtheorem{theorem}{Theorem}
\newtheorem{lem}{Lemma}
\newtheorem{prop}{Proposition}
\newtheorem{defi}{Definition}
\begin{document}
\title[Continuity of the renormalized volume under geometrically finite limits]{\textbf{Continuity of the renormalized volume under geometrically finite limits}}
\author{Franco Vargas Pallete}
\thanks{Research partially supported by NSF grant DMS-1406301}
\address{Department of Mathematics  \\
 University of California at Berkeley \\
775 Evans Hall \\
Berkeley, CA 94720-3860 \\
U.S.A.}
\email{franco@math.berkeley.edu}

\begin{abstract}
We extend the concept of renormalized volume for geometrically finite hyperbolic $3$-manifolds, and show that is continuous for  geometrically convergent sequences of hyperbolic structures over an acylindrical 3-manifold $M$ with geometrically finite limit. This allows us to show that the renormalized volume attains its minimum (in terms of the conformal class at $\partial M = S$) at the geodesic class, the conformal class for which the boundary of the convex core is totally geodesic.
\end{abstract}
\maketitle

\section{Introduction}

Renormalized volume is a quantity that gives a notion of volume for hyperbolic manifolds which have infinite volume under the classical definition. Its study for convex co-compact hyperbolic 3-manifolds can be found in \cite{KrasnovSchlenker}, while the geometrically finite case which includes rank-1 cusps has been developed in \cite{MoroianuGuillarmouRochon}. In this work we extend the concept to geometrically finite hyperbolic $3$-manifolds.

The article is organized as follows. Section \ref{sec:sequences} will give the necessary understanding of geometrically finite sequences with geometrically finite limit for their use in the main results. More precisely, we will see that sufficiently short geodesics are unlinked and parallel with respect to the boundary (Proposition \ref{prop:1}). Section \ref{sec:renormalizedvolume} will define renormalized volume for geometrically finite hyperbolic $3$-manifolds, along with some of its properties. In there we will do the necessary work to extend the methods of \cite{MoroianuGuillarmouRochon} to show that the renormalized volume ($V_\text{R}$) is continuous under geometric limit (Theorem \ref{theorem:main}), as well as it being comparable to the volume of the convex core (Theorem \ref{theorem:comparison}). Finally, section \ref{sec:consequences} uses the continuity under geometric limits to show that in an acylindrical manifold the geodesic class is the global minimum. In order to do that, we assume that a sequence going to the infimum exits the deformation space to a metric $g_0$. By Section \ref{sec:sequences} we know that the cusps appearing at the limit are unlinked and parallel to the boundary. Using \cite{BBCL} description of converging sequences of Kleinian groups, we create new sequences that converge to metrics nearby $g_0$. Then, by continuity (Theorem \ref{theorem:main}) $g_0$ will need to be a critical point of $V_\text{R}$, which is impossible by a volume comparison.

\textbf{Acknowledgements:} I would like to thank Ian Agol for his guidance during this project.  I would also like to thank Ian Biringer, Richard Canary and Jessica Purcell for their helpful comments.

\section{On geometrically finite sequences with geometrically finite limit}\label{sec:sequences}

Let $\{G_n\}$ be a sequence of Kleinian groups corresponding to geometrically finite hyperbolic structures over $M$ converging geometrically to $H$, which is also a geometrically finite group. Since $M$ is acylindrical, we can further assume that the sequence converges algebraically after taking $\{G_n\}$ to be the image of a representation $\rho_n:\Pi \rightarrow PSL(2,\mathbb{C})$ for a fix $\Pi$ (\cite{ThurstonAcy}, Theorem 1.2). Let $\rho_0:\Pi \rightarrow PSL(2,\mathbb{C})$ be the limit, with image $G$. Naturally $G<H$, hence $\mathbb{H}^3/G=:M_0$ (which is homeomorphic to $M$) is a covering space of $\mathbb{H}^3/H =: N$.

Thanks to \cite{JorgensenMarden} (section 4.5) we know that there exists a fundamental polyhedron $P \subseteq \mathbb{H}^3$ for $H$ such that when we pull back its pairing transformations to $\{G_n\}$ (n sufficiently large), we obtain a fundamental polyhedron $P_n$ for $\{G_n\}$. Take $\epsilon > 0$ a sufficiently small Margulis constant such that the $\epsilon$-thin part of $N$, $N^{<\epsilon}$, is a union of finitely many cusps (recall that $N$ is geometrically finite). This fundamental polyhedron $P$ can be taken generically, meaning that its center can be chosen among a dense sets in $\mathbb{H}^3$, which can be found in \cite{JorgensenMardenPoly} as the main result. We recall how this generic fundamental polyhedron behaves when considering cusps.

A rank-1 cusp appears in $P$ as the pairing of two tangent faces, being the thin part a neighbourhood of the point of tangency. In the conformal boundary appears as two cusps, either from different ends of $N$ or as two different cusps from the same end. Hence for sufficiently large balls $B$, the components of $\partial B \bigcap P$ that are not tori (which will correspond to rank-$2$ cusps) are the ends of $N$ joined by their paired cusps. Then when we look at $P_n$ we have surfaces (after identifications) of the same topological type at their intersection with $\partial B$, which will face an end of $M$. Note finally that the parabolic subgroup of each cusp are limits of peripheral elements.

A rank-2 cusp appears as the pairing of 6 faces (or 4) with a common point. For sufficiently large balls $B$, the component of $\partial B \bigcap P$ intersecting these faces identifies to a torus which is isotopic to an horo-torus. Hence the embedding of this torus has image in $\pi_1$ the parabolic subgroup associated to the $2$-cusp. Then when we pull-back to $\{G_n\}$ we have an embedded torus with image in $\pi_1$ an abelian subgroup $\langle \gamma\rangle$ that degenerates to the parabolic subgroup. Hence this torus fills a solid torus with a geodesic in its interior that correspond to $\gamma$. By process of elimination this solid torus is the region of $P_n$ exterior to $B$ corresponding to the 6 (or 4) faces of $P$, after we have made the identifications.

By the last two paragraphs, we can conclude the topological type of $N$ is $M$ with some curves drilled out. These curves correspond to short geodesics in $M_n = \mathbb{H}^3/G_n$ that end limiting to rank-$2$ cusps in $N$.

In order to throw some light on which curves could we end up drilling-out, let us introduce the concept of unknottedness and unlinkedness of a curve or curves with respect to a surface.

Let $M$ be a compact 3-manifold with a compact embedded surface $S$ and simple closed curve $C$. We say that $\gamma$ is \textit{unknotted with respect to} $S$ if $\gamma$ lies in a surface $S'$ isotopic to $S$. Analogously, we say that a collection of simple closed curves $\{ \gamma_1,\ldots, \gamma_n\}$ is \textit{unlinked with respect to} $S$ if  there is a collection of parallel disjoint surfaces $\{ S_1,\ldots, S_n\}$ isotopic to $S$ such that $\gamma_i\in S_i$ for all $i$.

The problem of short geodesics on a hyperbolic $M$ being unknotted/unlinked has been studied by \cite{Otal03} for bundles over the circle (w.r.t. a fiber); \cite{Otal95}, \cite{Otal03}, \cite{Souto} for compression bodies (w.r.t. its boundary); and \cite{Souto}, \cite{Breslin} for closed manifolds (w.r.t. a Heegaard surface). Here we are going to establish unlinkedness w.r.t. the boundary for $M$ acylindrical. The proof is a slight modification of \cite{Souto} for the quasifuchsian case.

\begin{prop} \label{prop:1} Let $M$ be compact acylindrical 3-manifold. Then for any $K>0$ there exists a constant $\epsilon >0$ (depending only on $K$ and the topological type of $M$) such that for any hyperbolic structure on the interior of $M$ with volume of the convex core at most $K$ we have: any finite collection of closed geodesics each with length less than $\epsilon$ is unlinked w.r.t. $\partial M$.
\end{prop}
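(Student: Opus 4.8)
The plan is to argue by contradiction through a geometric-limit (compactness) argument, in the spirit of Souto's treatment of the quasifuchsian case (\cite{Souto}), feeding into it the description of geometrically finite limits developed above in this section. First I would suppose the conclusion fails for some $K>0$: there are hyperbolic structures $g_n$ on $\operatorname{int}(M)$ with $\operatorname{vol}(\operatorname{core}(g_n))\le K$ and finite collections $\Gamma_n$ of closed geodesics, every component of length $<1/n$, such that $\Gamma_n$ is not unlinked with respect to $\partial M$. A closed geodesic of length $<1/n$ (once $1/n$ is below the Margulis constant) lies in an embedded Margulis tube whose volume is bounded below by a universal constant, so $\#\Gamma_n$ is bounded in terms of $K$; after passing to a subsequence I may assume $\#\Gamma_n=p$. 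Since $M$ is acylindrical and $\operatorname{vol}(\operatorname{core}(g_n))\le K$, Thurston's compactness theorem (as in \cite{ThurstonAcy}, together with the volume bound) lets me pass to a further subsequence converging algebraically to $\rho_0$ and geometrically to a group $H$ with $N:=\mathbb{H}^3/H$ geometrically finite, and the components of $\Gamma_n$, being pinched, converge to rank-$2$ cusps of $N$.

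Next I would invoke the analysis carried out above in this section — the generic Jorgensen--Marden fundamental polyhedron and the way its rank-$2$ cusps pull back to $\{g_n\}$: $N$ is homeomorphic to $M\setminus(c_1\sqcup\cdots\sqcup c_p)$ for a fixed disjoint system of simple closed curves $c_i\subset\operatorname{int}(M)$, and for all large $n$ the components of $\Gamma_n$ are isotopic in $(M,g_n)\cong M$ to $c_1,\dots,c_p$, each sitting as the core of a solid torus glued to the rest of the compact core of $(M,g_n)$ along an incompressible torus. Since $\{c_1,\dots,c_p\}$ is now a \emph{fixed} system of curves in the \emph{fixed} manifold $M$, it suffices to prove that this system is unlinked with respect to $\partial M$: that contradicts non-unlinkedness of $\Gamma_n$ for all large $n$ and completes the argument.

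To exhibit the parallel surfaces I would work in $N$ with its geometrically finite structure, following Souto. Acylindricity makes $\partial M$ incompressible, so $N$ has incompressible conformal boundary and the boundary of its convex core is an embedded surface, isotopic in $M$ to $\partial M$; moreover $\operatorname{vol}(\operatorname{core}(N))\le K$ forces the thick part of $\operatorname{core}(N)$ to have diameter bounded in terms of $K$ and the topological type of $M$, with the $p$ rank-$2$ cusp regions hanging off this bounded piece along Margulis tori. One then interpolates a one-parameter family of $\pi_1$-injective (pleated) surfaces homotopic to $\partial M$ sweeping across the cusp regions; consecutive surfaces in the family stay a bounded distance apart while the $i$-th cusp region is arbitrarily deep, which forces each $c_i$ to be isotopic onto one of the sweeping surfaces, and such a surface, being incompressible and homotopic to $\partial M$, is isotopic in $M$ to $\partial M$ by Waldhausen's theorem. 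Ordering the finitely many crossing parameters and perturbing the corresponding surfaces to contain the respective $c_i$ yields $p$ disjoint parallel copies of $\partial M$, one through each $c_i$, i.e. $\{c_1,\dots,c_p\}$ is unlinked.

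The main obstacle is this last step: passing from the homotopy-theoretic assertion ``$c_i$ is homotopic into $\partial M$'' to the genuinely embedded, mutually disjoint configuration demanded by the definition of unlinkedness, and adapting Souto's interpolation — originally run between the two ends of a quasifuchsian manifold — to the acylindrical setting, where there is only the single conformal boundary $\partial M$ to sweep from. Controlling self-intersections of the sweeping surfaces, and using acylindricity to rule out the $c_i$ being linked through one another, is the technical heart of the proof; Steps 1 and 2 are either standard or already prepared by the discussion above, so essentially all the work is concentrated here.
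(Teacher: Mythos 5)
Your setup (contradiction, Margulis bound on $\#\Gamma_n$, algebraic and geometric convergence to a geometrically finite $N$, and the Jorgensen--Marden polyhedron identifying $N$ as $M$ with a fixed curve system drilled out) matches the paper. But the reduction to a fixed system $\{c_1,\dots,c_p\}$ does not actually reduce the difficulty, and the step you yourself flag as ``the technical heart'' is precisely the content of the proposition; it is left without an argument. Your proposed mechanism --- a sweep-out by pleated surfaces in $N$ --- has two concrete problems: the curves $c_i$ do not live in $N$ (they have been drilled out, so ``the $i$-th cusp region is arbitrarily deep'' cannot force $c_i$ onto a sweeping surface in $N$), and in the acylindrical setting there is no second end to interpolate toward, so it is unclear what family of surfaces you are sweeping through or why it must pass through the (a priori knotted) tubes. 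Asserting that the cusp regions ``hang off'' the bounded thick part along tori already presupposes the boundary-parallelism you are trying to prove.

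The paper's proof fills exactly this gap by a different chain of tools, none of which appear in your proposal. First, a degree argument shows that \emph{no} homotopy from $p_n(\partial C_0)$ (the pulled-back compact core boundary) to $\partial M_n$ can avoid $\Gamma_n$; this is the obstruction that forces some essential curve $\eta\subset\partial M_n$ to interact with the tubes. Second, one replaces the hyperbolic metric on $M_n\setminus\Gamma_n$ by a pinched negatively curved metric (Lemma \ref{lem:1}), realizes $\eta$ by a simplicial ruled surface (Lemma \ref{lem:2}), and uses the bounded-area/short-loop lemma (Lemma \ref{lem:3}) plus the Margulis lemma to produce a curve $\alpha$ on $\partial\mathcal{N}(\Gamma_n)$ that is homotopic in $M_n\setminus\mathcal{N}(\Gamma_n)$ to an essential simple closed curve $\beta\subset\partial M_n$. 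Third, incompressibility rules out $\alpha$ being a meridian and acylindricity (indivisibility of boundary curves) rules out $\alpha$ being a proper power, so $\alpha$ is a longitude. Finally --- and this is how one passes from homotopy to embedded, disjoint surfaces --- the cylinder (annulus) theorem gives an \emph{embedded} annulus from the core geodesic to $\partial M_n$, and induction over the components of $\Gamma_n$ produces the disjoint parallel copies of $\partial M$. You would need to supply steps of this kind (or an equivalent) for your proof to go through; as written, the argument assumes its conclusion at the critical moment.
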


\begin{proof}
Assume that the result is false. Then we have a sequence of geometrically finite hyperbolic structures over $M$ with some knotted geodesics getting arbitrarily small. We can assume that this sequence converges algebraically and geometrically, and since the volume of the convex core is uniformly bounded above by $K$, we know that the limit is geometrically finite as well. Hence we have the previous description for this kind of limit, so let us borrow the notation. In particular, short geodesics (when they are shorter than the Margulis constant that we considered before) in $M_n$ for n large correspond to the cusps of the geometric limit $N$. Denote those curves by $\Gamma_n$. Since the description for rank-$1$ cusps already makes them boundary parallel, for the rest of the proof we are going to work as if only we had rank-$2$ cusps appearing on the limit.

Consider the covering map $p: M_0 \rightarrow N$. Take a compact core $C_0$ of $M_0$ (look at \cite{Scott73} for their existence). Since it has compact image by $p$, we can pull it back to $M_n$ for n large as a map $p_n: C_0 \rightarrow M_n$. At the level of fundamental groups, this process can be express by the following commutative diagram:

\begin{center}
\begin{tikzcd}
G \arrow[hook]{r}
\arrow{rd}[swap]{\cong}
&H \arrow{d}\\
&G_n
\end{tikzcd}
\end{center}

The inclusion corresponds to $p$, the realization of $N$ as $M_n\setminus\Gamma_n $ induces the vertical map, and their composition is the isomorphism given by algebraic convergence. Then the map $p_n:C_0\rightarrow M_n$ induces an isomorphism of the fundamental groups, and since these spaces are aspherical, it is also a homotopic equivalence.

Observe that the short geodesics are outside the image of $p_n$, and the homotopy equivalence defines a homotopy between  $\partial C_0$ (under $p_n$) and $\partial M_n$ (here we mean the manifold together with its conformal boundary). This allows us to conclude that there is no homotopy between $\partial C_0$ (under $p_n$) and $\partial M_n$ that doesn't intersect $\Gamma_n$. If such a homotopy existed, we would have a homotopy between $id_{M_n}$ and a map from $M_n$ landing inside $p_n(C_0)$, such that  $\partial M_n$ never intersects $\Gamma_n$. Then the degree of any point of $\Gamma$ is well-defined but different at the ends of the homotopy, which is a contradiction. 

Now we proceed as \cite{Souto}. We can define a metric $g$ in $M_n\setminus\Gamma_n$ with the following properties thanks to (a proof of this can be found on \cite{Agol02})

\begin{lem}\label{lem:1}
For every $\epsilon_0$ positive there is $\epsilon > 0$ such that the following holds: If $\Gamma$ is a collection of geodesics in a hyperbolic manifold $M$ which are shorter than $\epsilon$, then there is a complete Riemannian metric $g$on $M\setminus \Gamma$ with curvature pinched in $[-2;-1/2]$ and which coincides with the original hyperbolic metric outside of components of the $\epsilon_0$-thin part containing components of $\Gamma$.
\end{lem}

Foliate a triangle $\Delta\subseteq\mathbb{R}^2$ by segments with one endpoint a fixed vertex and the other endpoint at the opposite edge. An immersion $f:\Delta \rightarrow M$ is a \textit{ruled triangle} if every edge and leaf of the foliation is mapped to a geodesic segment. A continuous map $\phi$ from a surface $S$ to $M$ is a \textit{simplicial ruled surface} if there is a triangulation of $S$ such that $\phi$ is a ruled triangle at each piece of the triangulation and the cone angle at each vertex is at least $2\pi$. If, as in our case, $M$ has curvature pinched in $[-2;-1/2]$, then the pullback metric by $\phi$ to the universal cover of $S$ is complete and $CAT(-1/2)$, and in particular (as in \cite{Souto}) $\text{vol}(S) \leq 4\pi\vert\chi(S)\vert$. If we mandate one closed edge $I$ to be on the triangulation and to be mapped to a geodesic $\eta_0$ in $M$, we say that $\phi$ \textit{realizes} $\eta_0$.

\begin{lem}[\cite{Bonahon86} (\cite{Canary93}, Section 2)] \label{lem:2}
Let $S$ be a closed surface with $\eta\subset S$ an essential simple closed curve and $M$ a complete Riemannian $3$-manifold with curvature pinched in $[-2,-1/2]$. If $f:S\rightarrow M$ is a $\pi_1$-injective map such that $f(\eta)$ is homotopic to a geodesic $\eta_0$, then there is a simplicial ruled surface $\phi:S\rightarrow M$ homotopic to $f$ which realizes $\eta_0$.
\end{lem}

\begin{lem}\label{lem:3}
For all $A>0$ there are positive constants $L,\epsilon_L$ such that for every complete hyperbolic $3$-manifold $M$ with curvature pinched in $[-2,-1/2]$  and every $\pi_1$-injective simplicial ruled surface $\phi:S\rightarrow M$ with $-\chi(S)\leq A$ we have:
\begin{itemize}
	\item For every point $x\in S$ there is a non-homotopically trivial loop $\gamma_x$ in $S$ based at $x$ which is shorter than $L$. This loop can be chosen to be simple.
	\item If there are two loops of length $\leq L$ in $M$ based at the same point and at least one of them has length $\leq\epsilon_L$, then they generate an abelian subgroup of $\pi_1(M)$.
\end{itemize}
\end{lem}

Claim: Let $\mathcal{N}(\Gamma_n)$ be the normal neighbourhood of $\Gamma_n$ given by the $\epsilon_L$-thin part. Then there is a closed curve $\alpha$ in $\partial \mathcal{N}(\Gamma_n)$ homotopic in $M_n\setminus \mathcal{N}(\Gamma_n)$ to a essential simple closed curve $\beta$ in $\partial M_n$

\begin{proof}
To prove the claim proceed as follows. Let $L$ and $\epsilon_L$ be the constants provided by Lemma \ref{lem:3} for $A=\vert\chi(\partial M)\vert$, and $\epsilon$ the constant provided by Lemma \ref{lem:1} for $\epsilon_0 = \epsilon_L$. Furthermore, assume that all closed geodesics considered in $\Gamma_n$ are shorter than $\epsilon$. There is an essential simple curve $\eta$ in $\partial M_n$ such that the homotopy with $p_n(C_0)$ gives a mapped cylinder from this curve to $p_n(\partial C_0)$ that intersects $\Gamma_n$ non-trivially (follows from the nonexistence of a homotopy that evades $\Gamma_n$). Since $\eta$ is essential and $\partial M_n$ incompressible, both ends of the mapped cylinder are homotopic to a closed geodesic in $M_n$. Then at least one end is not homotopic to this closed geodesic in $M_n\setminus\mathcal{N}(\Gamma_n)$ (could be both if for example this geodesic lies in $\mathcal{N}(\Gamma_n)$). Let us examine each case:

\begin{enumerate}
	\item $\eta$ is not homotopic to a geodesic in $M_n\setminus\mathcal{N}(\Gamma_n)$:
	
	Let $g$ the metric in $M_n\setminus\Gamma_n$ given by Lemma \ref{lem:1}. Since $\eta$ is non-trivial in $\pi_1(M_n\setminus\Gamma_n)$, it could either represent a parabolic element or be homotopic to a closed geodesic $\eta_0$. If it's parabolic then it is homotopic in $M_n\setminus\mathcal{N}(\Gamma_n)$ to a curve in $\partial\mathcal{N}(\Gamma_n)$, so we only need to see what happens in the latter case.
	
	Now, $\eta_0$ has a point $x$ in $\mathcal{N}(\Gamma_n)$, otherwise it will be also a geodesic of $M_n$ and will contradict this case assumption. By Lemma \ref{lem:2} there is a simplicial ruled surface $\phi$ homotopic to $\partial M_n$ realizing $\eta_0$. By Lemma \ref{lem:3} there is a simple essential curve $\gamma_x$ in $\phi$ with length less than $L$. But there is a longitude in $\mathcal{N}(\Gamma_n)$ based at $x$ with length less than $\epsilon_L$, and by Lemma \ref{lem:3} commutes with $\gamma_x$. Hence $\gamma_x$ is on the parabolic subgroup corresponding to a cusps. That makes the essential simple closed curve in $M_n$ corresponding to $\gamma_x$ homotopic in $M_n\setminus\mathcal{N}(\Gamma_n)$ to a curve lying on $\partial\mathcal{N}(\Gamma_n)$.
	
	\item $p_n(\eta)$ is not homotopic to a geodesic in $M_n\setminus\mathcal{N}(\Gamma_n)$:
	
	Let again $g$ be the metric in $M_n\setminus\Gamma_n$ given by Lemma \ref{lem:1}. Again $p_n(\eta)$ is parabolic or homotopic to a closed geodesic. If parabolic, then the closed geodesic of $M_n$ homotopic to $p_n(\eta)$ is a multiple of a component of $\Gamma_n$. Then neither $\eta$ is homotopic to a geodesic (of $M_n$) in $M_n\setminus\mathcal{N}(\Gamma_n)$. Swap then to case 1.
	
	By an analogous method as the latter part of the previous case, we obtain an essential closed curve of $\partial C_0$ such that its image by $p_n$ can be homotoped to $\partial\mathcal{N}(\Gamma_n)$, hence to one component of $\Gamma_n$. Then that closed curve in $\partial M_n$ is not homotopic to a geodesic (of $M_n$ in $M_n\setminus\mathcal{N}(\Gamma_n)$. Apply case 1 to this new essential simple closed curve.
\end{enumerate}
\end{proof}

Observe that $\alpha$ can't be a multiple of a meridian since this will imply that $\beta$ is trivial, and this is impossible since $\partial M_n$ is incompressible. Also $\alpha$ can't be properly divided by the core of $\mathcal{N}(\Gamma_n)$. To see this, observe that $M_n$ is acylindrical, so there is a hyperbolic structure with convex core totally geodesic. At its boundary there is a simple closed geodesic homotopic to $\beta$, so this element can't be divided in $M_n$, so neither does $\alpha$. In conclusion, $\alpha$ needs to be a longitude. Name $\gamma$ the component of $\Gamma_n$ that corresponds to $\alpha$

Now, thanks to the cylinder theorem (as in \cite{Marden}, Section 3.7, pp. 129), there is an embedded cylinder in $M_n\setminus\mathcal{N}(\Gamma_n)$ between $\alpha$ and $\beta$. And because $\alpha$ is a longitude, there is an embedded cylinder in $M_n\setminus(\Gamma_n\setminus\gamma)$ with ends $\beta$ and $\gamma$. We can use this cylinder to isotope $\gamma$ to $\partial M_n$ within $M_n\setminus(\Gamma_n\setminus\gamma)$. To finish the proof, observe that the procedure to find an embedded cylinder is valid for any subcollection of curves of $\Gamma_n$, so we can isotope them to parallel surfaces one at the time. Induction on the number of components of $\Gamma_n$ closes the argument.

\end{proof}

Notice that we could have required instead $M$ to be incompressible and with boundary indivisible (i.e. every essential simple closed curve of the boundary can't be non-trivially divided by an element of $\pi_1(M)$).

\section{Renormalized volume for geometrically finite manifolds}\label{sec:renormalizedvolume}



To prove our main theorem, we are going to adapt Theorem 5 of \cite{MoroianuGuillarmouRochon}:

\begin{theorem}[Theorem 5, \cite{MoroianuGuillarmouRochon}]\label{theorem:main}
Let $g_\epsilon$ be an admissible degeneration of convex co-compact hyperbolic metrics on $M$ with geometric limit a geometrically finite hyperbolic metric $g_0$ over $N$. Then:
\begin{equation}
	\lim_{\epsilon\rightarrow 0} \textnormal{Vol}_R(M, g_\epsilon) = \textnormal{Vol}_R(N,g_0).
\end{equation}
\end{theorem}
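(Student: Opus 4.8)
The plan is to evaluate both renormalized volumes through the $W$-volume functional attached to the hyperbolic representative of the conformal boundary, following the scheme of \cite{MoroianuGuillarmouRochon}, and to isolate the one genuinely new point, which is the behaviour of the thin tubes around the forming rank-$2$ cusps. Recall that for a convex co-compact or geometrically finite hyperbolic metric $g$ on a $3$-manifold $X$ one chooses the hyperbolic metric $h$ in the conformal class on the conformal boundary $\partial_\infty X$, forms the Epstein surface $S_h\subset X$ and the region $X_h$ it cuts off, and sets
\begin{equation*}
	\textnormal{Vol}_R(X,g)=W(X,g,h)=\textnormal{Vol}(X_h)-\tfrac14\int_{\partial X_h}H\,da.
\end{equation*}
By the analysis of Section \ref{sec:sequences} the short geodesics $\Gamma_\epsilon\subset(M,g_\epsilon)$ that pinch are of two kinds: those limiting to rank-$1$ cusps, which also create cusps on the conformal boundary and are handled verbatim as in \cite{MoroianuGuillarmouRochon}, and those limiting to rank-$2$ cusps, which are drilled in the interior and leave $\partial_\infty$ untouched. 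To fix ideas I describe the argument when only the latter occur (as in Section \ref{sec:sequences}, where one reduces to this case); the rank-$1$ contributions to both terms below are then added exactly as in \cite{MoroianuGuillarmouRochon}. In the rank-$2$ case the conformal surfaces $\partial_\infty(M,g_\epsilon)$ converge smoothly to $\partial_\infty(N,g_0)$, hence the hyperbolic representatives $h_\epsilon$ converge to $h_0$ in $C^\infty$.

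The boundary term is the easy one: the Epstein surface depends continuously on the germ along $\partial_\infty$ of the hyperbolic structure together with the conformal representative, and $S_{h_\epsilon}$ lies in the part of $(M,g_\epsilon)$ uniformly bounded away from $\Gamma_\epsilon$, where the geometric convergence is $C^\infty$ on compact sets; hence $S_{h_\epsilon}\to S_{h_0}$ with convergence of the induced metrics and second fundamental forms, so $\int_{\partial X_h}H\,da$ passes to the limit.

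The content is the volume term, $\textnormal{Vol}(M_{h_\epsilon})\to\textnormal{Vol}(N_{h_0})$. I would choose once and for all disjoint horoball neighbourhoods $\mathcal C_1,\dots,\mathcal C_k$ of the rank-$2$ cusps of $N$, so that $\mathcal K:=N_{h_0}\setminus(\mathcal C_1\cup\dots\cup\mathcal C_k)$ is compact and $\textnormal{Vol}(N_{h_0})=\textnormal{Vol}(\mathcal K)+\sum_i\textnormal{Vol}(\mathcal C_i)$, recalling also that $\textnormal{Vol}(\mathcal C_i)=\tfrac12\,\textnormal{Area}(\partial\mathcal C_i)$. Geometric convergence provides $(1+o(1))$-bi-Lipschitz embeddings $\psi_\epsilon$ of a fixed compact neighbourhood of $\mathcal K$ into $(M,g_\epsilon)$, and one checks that $M_{h_\epsilon}$ decomposes, up to subsets of volume $o(1)$ (thin slabs separating the intrinsic Epstein surface $S_{h_\epsilon}$ from $\psi_\epsilon(S_{h_0})$), as $\psi_\epsilon(\mathcal K)$ together with solid tori $\mathcal D_{i,\epsilon}$ with core the short geodesic $\gamma_{i,\epsilon}$ and boundary $\psi_\epsilon(\partial\mathcal C_i)$. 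On one hand $\textnormal{Vol}(\psi_\epsilon(\mathcal K))\to\textnormal{Vol}(\mathcal K)$. On the other hand $\mathcal D_{i,\epsilon}$, after replacing its boundary by a nearby genuine equidistant torus of radius $r_{i,\epsilon}$ around $\gamma_{i,\epsilon}$ (costing volume $o(1)$), has volume $\pi\,\ell(\gamma_{i,\epsilon})\sinh^2 r_{i,\epsilon}=\tfrac12\tanh(r_{i,\epsilon})\,\textnormal{Area}(\partial\mathcal D_{i,\epsilon})$; since $\ell(\gamma_{i,\epsilon})\to0$ forces $r_{i,\epsilon}\to\infty$ while $\textnormal{Area}(\partial\mathcal D_{i,\epsilon})\to\textnormal{Area}(\partial\mathcal C_i)$, this gives $\textnormal{Vol}(\mathcal D_{i,\epsilon})\to\tfrac12\,\textnormal{Area}(\partial\mathcal C_i)=\textnormal{Vol}(\mathcal C_i)$. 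Summing, $\textnormal{Vol}(M_{h_\epsilon})\to\textnormal{Vol}(\mathcal K)+\sum_i\textnormal{Vol}(\mathcal C_i)=\textnormal{Vol}(N_{h_0})$, and with the boundary term this is the theorem.

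The part I expect to require the most care is the bookkeeping in the volume term: extracting from pointed geometric convergence that $M_{h_\epsilon}$ really is $\psi_\epsilon(\mathcal K)$ plus the solid tori up to volume $o(1)$ — in particular that $S_{h_\epsilon}$ and $\psi_\epsilon(S_{h_0})$ bound a slab of volume tending to $0$ (which uses the continuous dependence of the Epstein construction on its data), that $\mathcal K$ is compact and $N_{h_0}$ is genuinely exhausted with no volume escaping toward $\partial_\infty$ (which holds because the collar between $\partial C(N)$ and $S_{h_0}$ is compact), and that $\mathcal D_{i,\epsilon}$ has the asserted tube geometry near its boundary. It is precisely at this last point that restricting to rank-$2$ cusps pays off: the thin parts are honest solid tori with a fully explicit metric, so the volume convergence $\textnormal{Vol}(\mathcal D_{i,\epsilon})\to\textnormal{Vol}(\mathcal C_i)$ reduces to the elementary identity above, in place of the finer analysis of the Epstein surface near a rank-$1$ cusp carried out in \cite{MoroianuGuillarmouRochon}.
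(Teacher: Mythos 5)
Your proposal is correct in outline but follows a genuinely different route from the paper. The paper does not redo the analysis at all: having defined $\textnormal{Vol}_R$ as the finite part at $z=0$ of $H(z)=\int_M\hat\rho^z\,\textnormal{dvol}_g$, it observes that the proof of Theorem 5 of \cite{MoroianuGuillarmouRochon} "follows word by word," the only new phenomenon being that the exceptional finite-volume set $\mathcal{V}$ on which $\hat\rho$ is not controlled now also contains the rank-$2$ cusps, which is harmless because $\mathcal{V}$ still has finite volume and stays away from compact subsets of the conformal boundary; the genuinely new content of that section of the paper is the verification that a convex co-compact sequence with geometrically finite limit is admissible, which is a hypothesis of the theorem as stated and so not part of its proof. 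You instead work with the Epstein/$W$-volume representative and prove convergence by an explicit geometric decomposition: convergence of the Epstein surfaces and of $\int H\,da$ on the thick part, plus the elementary identity $\textnormal{Vol}=\tfrac12\tanh(r)\,\textnormal{Area}$ for tubes versus $\textnormal{Vol}=\tfrac12\textnormal{Area}$ for rank-$2$ cusp neighbourhoods, which cleanly explains why no volume is lost as Margulis tubes degenerate to cusps. This is more transparent and more elementary for the rank-$2$ part (and correctly isolates it as the new case), at the cost of two debts: (i) you must justify that the finite-part definition used in the paper agrees with $\textnormal{Vol}(X_h)-\tfrac14\int_{\partial X_h}H\,da$ for the limit manifold $N$ carrying rank-$2$ cusps --- with the paper's normalization $\hat\rho^2g|_S=\hat h/4$ this follows by taking $\mathcal{V}$ to be the Epstein region union the cusp neighbourhoods in the identity for $\text{FP}_{z=0}H(z)$ and computing as in the paper's totally geodesic example, but it is not automatic and should be said; and (ii) the rank-$1$ contributions, where the admissibility hypothesis (the multiplier condition) is actually used, are still deferred entirely to \cite{MoroianuGuillarmouRochon}, exactly as in the paper, so your argument does not replace that analysis but only the rank-$2$ bookkeeping. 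The technical points you flag (the $o(1)$ slab between $S_{h_\epsilon}$ and $\psi_\epsilon(S_{h_0})$, and trapping $\psi_\epsilon(\partial\mathcal{C}_i)$ between equidistant tori of nearby radii) are real but standard consequences of pointed geometric convergence and do not hide an obstruction.
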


A sequence of metrics is admissible (as in \cite{MoroianuGuillarmouRochon}) if:
\begin{enumerate}
	\item There are finitely many peripheral curves pinching to rank-$1$ cusps, and for the group elements $g_j$ associated to those curves the multiplier $\exp^{l_j+  2\pi i\alpha_j }$ ($l_j>0$) satisfies $\lim l_j = \lim \alpha_j = 0, \lim \alpha_j/l_j \in \mathbb{R}$. \label{multipliercondition}
	\item The metrics associated to $g_\epsilon$ converge smoothly to the metric of $g_0$ on compact subsets of $\overline{M}$ outside a neighborhood of the cusps, when extended conformally to the boundary.
\end{enumerate}

Let us show that a sequence of convex co-compact metrics with geometrically finite limit satisfies (\ref{multipliercondition}) by contradiction. Since $g_j$ is converging to a parabolic element, its multiplier converges to $1$. Then (after taking a subsequence) $\lim l_j = \lim \alpha_j = 0, \lim \vert \alpha_j/l_j\vert = +\infty $. Let $k_j\in\mathbb{Z}$ be a sequence to be determined and denote by $\lfloor k_j/\alpha_j\rfloor$ the greatest integer of $k_j/\alpha_j$ and look at $g_j^{\lfloor k_j/\alpha_j\rfloor}$. Its multiplier $\exp^{ l_j \lfloor \frac{k_j}{\alpha_j}\rfloor + 2\pi i \alpha_j \lfloor \frac{k_j}{\alpha_j}\rfloor }$ converges to $1$ as long as $\lim l_j \lfloor \frac{k_j}{\alpha_j}\rfloor = 0$. Under this condition $g_j^{\lfloor k_j/\alpha_j\rfloor}$ converges to a parabolic element. This parabolic element has the same fixed point as the limit of $g_j$. For simplicity, let this common fixed point be $0$, which is the limit of $p_+,p_-$; the attractive and repulsive fixed points of $g_j$. With this, the limit of $g_j$ is $\left[  \begin{array}{ c c }     1 & 0 \\   a & 1 \end{array} \right]$, where $a = \lim\frac{1-\exp^{l_j+  2 \pi i\alpha_j}}{p_- - p_+}$. Hence if $\lim\frac{1-\exp^{l_j+  2 \pi i\alpha_j)}}{1-\exp^{ l_j \lfloor \frac{k_j}{\alpha_j}\rfloor + 2\pi i \alpha_j \lfloor \frac{k_j}{\alpha_j}\rfloor}} \notin \mathbb{R}$, the limit of $g_j$ belongs to a rank-2 cusp instead to a rank-1 cusp, giving us the contradiction. To obtain this, it is enough to pick $k_j$ such that $\frac{\alpha_j\lfloor \frac{k_j}{\alpha_j}\rfloor - k_j}{l_j\lfloor\frac{k_j}{\alpha_j}\rfloor} $ is uniformly bounded.

Let us pick $k_j$ in the following way:

\begin{itemize}
	\item $ \frac{l_j}{\alpha^2_j} \geq 1$:
	Take $k_j = 1$. Then $\vert l_j\lfloor \frac{k_j}{\alpha_j} \rfloor - \frac{l_j}{\alpha_j} \vert  \leq l_j$, and since $\lim l_j = \lim \frac{l_j}{\alpha_j} =0$, for these $j$'s we have $\lim  l_j\lfloor \frac{k_j}{\alpha_j} \rfloor = 0$.
	
	Now
	\begin{equation}
	 \left|\frac{\alpha_j\lfloor \frac{k_j}{\alpha_j}\rfloor - k_j}{l_j\lfloor\frac{k_j}{\alpha_j}\rfloor}\right| 
	 \leq \left|\frac{\alpha_j}{l_j\lfloor\frac{1}{\alpha_j}\rfloor}\right| 
	 \leq \max \left\lbrace\frac{\alpha^2_j}{l_j} , \frac{\alpha^2_j}{l_j(1-\alpha_j)} \right\rbrace
	\end{equation}
	is uniformly bounded
	
	\item $ \frac{l_j}{\alpha^2_j} < 1$:
	Take $k_j = \pm\lfloor \frac{\alpha^2_j}{l_j}\rfloor$, with the sign of $\alpha_j$. Then $\vert l_j\lfloor \frac{k_j}{\alpha_j} \rfloor \vert \leq  l_j \frac{\lfloor \frac{\alpha^2_j}{l_j}\rfloor}{\vert\alpha_j\vert} \leq \vert\alpha_j\vert$ converges to $0$ for such $j$'s.
	
	Now
	\begin{equation}
	\left|\frac{\alpha_j\lfloor \frac{k_j}{\alpha_j}\rfloor - k_j}{l_j\lfloor\frac{k_j}{\alpha_j}\rfloor}\right|
	\leq \left|\frac{\alpha_j}{l_j\lfloor\frac{k_j}{\alpha_j}\rfloor}\right| 
	\leq \frac{\vert\alpha_j\vert}{l_j\left( \frac{\frac{\alpha^2_j}{l_j}-1}{\vert\alpha_j\vert}-1\right) }
	\leq \frac{1}{1 - \frac{l_j}{\alpha^2_j} -\frac{l_j}{\vert\alpha_j\vert}}
	\end{equation}
	is uniformly bounded for a sufficiently large $j$.
\end{itemize}

The second condition follow easily if we pullback a fundamental polyhedron for $G$ to fundamental polyhedrons for $\{G_n\}$ as in \cite{JorgensenMarden} (convergence of metrics in compact subsets of the interior already follows from geometric convergence). To extend this to compact sets touching the boundary we use that $G$ is geometrically finite, so that a fundamental polyhedron can be pulled back. Observe that we look at convergence on compact sets when we compactify the non-toroidal ends. So we have proved:

\begin{lem}
If a sequence of convex co-compact metrics have a geometrically finite geometric limit, then the sequence is admissible.
\end{lem}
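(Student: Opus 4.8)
The plan is to verify separately the two clauses defining an admissible degeneration, drawing on the structure results of Section~\ref{sec:sequences}; recall that $G_n\to G<H$ algebraically while $G_n\to H$ geometrically, with $\mathbb{H}^3/H$ geometrically finite. \emph{First clause, condition~(\ref{multipliercondition}).} Being geometrically finite, the limit $\mathbb{H}^3/H$ has only finitely many cusps, so only finitely many peripheral curves of $M$ pinch; by the rank-$1$ versus rank-$2$ dichotomy of Section~\ref{sec:sequences} those pinching to rank-$1$ cusps form a finite sub-family, say with associated group elements $g_j$ and multipliers $\exp(l_j+2\pi i\alpha_j)$, $l_j>0$. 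That $\lim l_j=\lim\alpha_j=0$ is automatic since $g_j$ limits to a parabolic and its multiplier therefore tends to $1$. It remains to exclude $|\alpha_j/l_j|\to\infty$. Assuming this, I would select a nonzero integer sequence $k_j$, set $m_j:=\lfloor k_j/\alpha_j\rfloor$, and arrange $l_j m_j\to 0$, so that the power $g_j^{m_j}$ also converges to a parabolic element, necessarily fixing the same point of $\partial\mathbb{H}^3$ as $\lim g_j$. Normalizing that point to $0$, both $\lim g_j$ and $\lim g_j^{m_j}$ become unipotent matrices fixing $0$, with off-diagonal entries $a$ and $b$ say; a rank-$1$ cusp has cyclic parabolic subgroup, forcing $b/a\in\mathbb{R}$, so it suffices to choose $k_j$ with additionally $\lim b/a\notin\mathbb{R}$ to reach a contradiction. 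Expressing $a$ and $b$ through the multipliers and the fixed points $p_\pm$ of $g_j$ (all tending to $0$), this obstruction reduces to keeping $\dfrac{\alpha_j m_j-k_j}{l_j m_j}$ bounded while $l_j m_j\to 0$, and a two-case analysis on $l_j/\alpha_j^2$ supplies such $k_j$: take $k_j=1$ when $l_j/\alpha_j^2\ge 1$, and $k_j=\pm\lfloor\alpha_j^2/l_j\rfloor$ with the sign of $\alpha_j$ when $l_j/\alpha_j^2<1$, both properties following from elementary estimates. Hence $\lim\alpha_j/l_j$ is finite.

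\emph{Second clause, condition (2).} By \cite{JorgensenMarden} one may fix a generic fundamental polyhedron $P$ for $H$ whose face-pairing transformations, for all large $n$, pull back to face-pairing transformations of fundamental polyhedra $P_n$ for $G_n$, and which converge to those of $H$ (equivalently to those of $G$ through the algebraic isomorphism). Convergence of the hyperbolic metrics of $g_\epsilon$ to that of $g_0$ on compact subsets of the interior of $\overline{M}$ away from the cusps is then immediate from geometric convergence. Because the algebraic limit $G$ is itself geometrically finite, $P$ may be taken to reach the domain of discontinuity, so the same face-pairing data describes a fundamental domain on the conformal boundary; after compactifying the non-toroidal ends this yields smooth convergence of the conformally extended metrics on compact subsets meeting $\partial\overline{M}$ as well. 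With both clauses checked, the sequence is admissible.

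The delicate point lies entirely in the first clause: one must exhibit the auxiliary exponents $k_j$, hence $m_j=\lfloor k_j/\alpha_j\rfloor$, so that $l_j m_j\to 0$ (keeping $g_j^{m_j}$ on its way to a parabolic) while the twisting ratio $\dfrac{\alpha_j m_j-k_j}{l_j m_j}$ stays bounded (forcing the two limiting unipotents to be $\mathbb{R}$-independent, so the limiting cusp is genuinely rank-$2$). Splitting the index set by the size of $l_j/\alpha_j^2$ is exactly what reconciles these two competing demands; everything else is either a formal consequence of the geometric limit being geometrically finite --- finiteness of the pinching family, geometric finiteness of the subgroup $G$ --- or a direct invocation of the J\o rgensen--Marden fundamental-polyhedron pull-back.
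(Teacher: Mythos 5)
Your proof is correct and follows essentially the same route as the paper: condition~(\ref{multipliercondition}) is verified by the identical contradiction argument (passing to the power $g_j^{\lfloor k_j/\alpha_j\rfloor}$, requiring $l_j\lfloor k_j/\alpha_j\rfloor\to 0$ and boundedness of $\frac{\alpha_j\lfloor k_j/\alpha_j\rfloor-k_j}{l_j\lfloor k_j/\alpha_j\rfloor}$, with the same two-case choice of $k_j$ according to the size of $l_j/\alpha_j^2$), and condition (2) is obtained exactly as in the paper via the J\o rgensen--Marden pull-back of a fundamental polyhedron, using geometric finiteness to reach the conformal boundary. No substantive differences to report.
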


Now it's a matter of observing that the approach of \cite{MoroianuGuillarmouRochon} extends to our case. 

First let us fix some notations and statements from \cite{MoroianuGuillarmouRochon}. A hyperbolic cusp (with metric $h$) on a surface can be parametrized by
\begin{equation}
	\big( (0,\frac{1}{R})_v \times (\mathbb{R}/\frac{1}{2}\mathbb{Z})_w, h = \frac{dv^2}{v^2} + v^2dw^2  \big),
\end{equation}
 and capped to $[0,\frac{1}{R})_v \times (\mathbb{R}/\frac{1}{2}\mathbb{Z})_w$. With this, compactify any surface $(S,h)$ (where $h$ is a metric that has hyperbolic cusps) to a surface $\overline{S}$. Define as well $\dot{\mathcal{C}}^\infty(\overline{S})$ as the subspace of $\mathcal{C}^\infty(\overline{S})$ that vanishes to infinite order at $\partial\overline{S}$, and $\mathcal{C}^\infty_r(\overline{S})$ as the functions $f$ with $\partial_w f \in \dot{\mathcal{C}}^\infty(\overline{S})$ (where $w$ is the corresponding coordinate at each hyperbolic cusp).

\begin{prop}[Prop. 2.4 \cite{MoroianuGuillarmouRochon}]
Let $M$ be a geometrically finite hyperbolic $3$-manifold with rank-$1$ cusps. Let $(S,[h])$ be the conformal boundary and $h^\textnormal{hyp}$ be the complete hyperbolic metric in that conformal class. For each $\psi \in \mathcal{C}^\infty_r(\overline{S})$, consider the conformal representative $\hat h= e^{2\psi} h^{\textnormal{hyp}}$. There exists a smooth function $\hat\rho$ (in $\overline{M}_c$, the closure with corners) called geodesic boundary defining function, and a closed set $\mathcal{V}\subset\overline{M}_c$ with finite volume such that:
\begin{equation}
	\Big\vert \frac{d\hat\rho}{\hat\rho}\Big\vert_g =1 \textnormal{ in } \overline{M}_c \setminus\mathcal{V},  \hat\rho^2g\vert_S = \frac{\hat h}{4}.
\end{equation}
The set $\mathcal{V}$ does not intersect a compact subset of $S$.  The function $\hat\rho$ is defined uniquely near $\overline{S}$. The function $H(z) := \int_M \hat\rho^z \textnormal{dvol}_g$ admits a meromorphic extension from $\textnormal{Re}(z) > 2$ to a neighborhood of $z=0$ with a simple pole at $z=0$.
\end{prop}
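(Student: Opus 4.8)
\emph{Sketch of proof.} The plan is to build the geodesic boundary defining function $\hat\rho$ from three local models glued by a partition of unity, to \emph{define} $\mathcal{V}$ as the closed set on which the glued function fails the eikonal equation $|d\hat\rho/\hat\rho|_g=1$, and then to evaluate $H(z)$ by separating the infinite-volume funnel collars of $\overline{M}_c$ from the rest. Away from the cusps I would run the classical convex co-compact construction of Graham--Lee and Epstein (as used in \cite{KrasnovSchlenker}): near a point of $\overline{S}$ off the cusps I prescribe $\hat\rho^2 g|_S=\hat h_0$, where $\hat h_0:=\hat h/4$, and solve $|d\hat\rho/\hat\rho|_g=1$ by the method of characteristics. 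Setting $\bar g=\hat\rho^2 g$, the equation reads $|d\hat\rho|_{\bar g}=1$, so $-\log\hat\rho$ is the $\bar g$-distance to the boundary and $\hat\rho$ exists, and is unique, for as long as that distance function stays smooth; since every compact part of $S$ sits at uniformly positive $g$-distance from the rank-$1$ cusps, this furnishes $\hat\rho$ on a full collar of each compact subset of $S$, which will therefore lie outside $\mathcal{V}$.

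Near a rank-$1$ cusp I would use the explicit picture recalled in Section \ref{sec:sequences} and in \cite{JorgensenMarden}: the cusp is, near infinity, a quotient of a region of $\mathbb{H}^3$ by a single parabolic, its conformal boundary contributes the two paired surface cusps parametrised by $(v,w)$, and the thin part lies around the tangency point of the two faces. In those coordinates I write $\hat\rho$ down explicitly so that $\hat\rho^2 g|_S=e^{2\psi}h^{\textnormal{hyp}}/4$ --- near the cusp a bounded conformal perturbation of the model cusp metric, since $\psi\in\mathcal{C}^\infty_r(\overline{S})$ is smooth on the compact surface $\overline{S}$ and has $\partial_w\psi$ vanishing to infinite order at $v=0$ --- and I check that $|d\hat\rho/\hat\rho|_g=1$ holds on the whole cusp region outside a bounded piece $\mathcal{V}_{\textnormal{cusp}}$ around the tangency point, of finite $g$-volume by the model computation. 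Gluing this to the Graham--Lee collar across the transition annulus (absorbing that annulus into $\mathcal{V}$), and extending $\hat\rho$ over the remaining parts of $\overline{M}_c$ (interior, rank-$2$ cusp ends) as an arbitrary smooth positive function, I obtain a global $\hat\rho$; then $\mathcal{V}$ is the complement in $\overline{M}_c$ of the funnel region where the eikonal equation holds, which is contained in the convex core together with the finitely many $\mathcal{V}_{\textnormal{cusp}}$, hence has finite volume because the manifold is geometrically finite, and which meets $\overline{S}$ only at the cusp points. Uniqueness of $\hat\rho$ near $\overline{S}$ is then immediate from the uniqueness in the characteristic construction (valid in both models).

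For the meromorphic extension, I fix a small $\delta>0$ and split $M=\{\hat\rho\ge\delta\}\cup\{\hat\rho<\delta\}$. The set $\{\hat\rho\ge\delta\}$ has finite $g$-volume, since the convex core is of finite volume, each funnel cross-section is a finite-area hyperbolic surface, and $\int_\delta^{\epsilon_0}\hat\rho^{-3}\,d\hat\rho<\infty$; and $\hat\rho$ is bounded there, so $\int_{\{\hat\rho\ge\delta\}}\hat\rho^z\,\textnormal{dvol}_g$ is entire. On $\{\hat\rho<\delta\}$, which lies in the funnel region where the eikonal equation holds, $g$ is in normal form $g=\hat\rho^{-2}(d\hat\rho^2+h_{\hat\rho})$ with $h_{\hat\rho}=\hat h_0+\hat\rho^2 h_2+\cdots$ an expansion in even powers of $\hat\rho$ with smooth coefficient tensors, so $\textnormal{dvol}_g=\hat\rho^{-3}a(\hat\rho,\,\cdot\,)\,d\hat\rho\,dv_{\hat h_0}$ with $a$ smooth and even in $\hat\rho$. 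Over the compact part of $S$ the Hadamard/Mellin expansion gives $\int_0^\delta\!\int_S\hat\rho^{z-3}a(\hat\rho,\,\cdot\,)\,dv_{\hat h_0}\,d\hat\rho=\sum_{k\ge0}\frac{1}{k!}\big(\int_S\partial_{\hat\rho}^k a(0,\,\cdot\,)\,dv_{\hat h_0}\big)\frac{\delta^{\,z-2+k}}{z-2+k}+(\textnormal{holomorphic near }0)$, with only simple poles and, $a$ being even, the only one near $0$ at $z=0$, of residue $\frac12\int_S\partial_{\hat\rho}^2 a(0,\,\cdot\,)\,dv_{\hat h_0}$. Over the cusp directions of $S$ I would run the same computation in the explicit $(v,w,\hat\rho)$ model; the extra integration over $v\in(0,\frac{1}{R})$ converges and is holomorphic in $z$ because the surface cusps have finite area and $\mathcal{C}^\infty_r(\overline{S})$ makes the coefficient functions extend smoothly across $v=0$. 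Summing the pieces gives $H$ meromorphic near $0$ with a simple pole there.

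The hard part will be the rank-$1$ cusp model, since everything off the cusps is the standard convex co-compact argument of \cite{KrasnovSchlenker}. I must make the model precise enough that (i) the explicit $\hat\rho$ really matches the Graham--Lee collar across the transition annulus; (ii) the breakdown locus $\mathcal{V}_{\textnormal{cusp}}$ is closed, of finite $g$-volume, and touches $\overline{S}$ only at the cusp point; and (iii) the $v$-integrations in $H(z)$ converge uniformly for $z$ near $0$ and depend holomorphically on $z$. It is exactly for (iii) --- and already for the asymptotic expansion of $h_{\hat\rho}$ near a cusp to be well behaved --- that the subspace $\mathcal{C}^\infty_r(\overline{S})\subsetneq\mathcal{C}^\infty(\overline{S})$ is the right hypothesis: it keeps the conformal factor $e^{2\psi}$ from disturbing the asymptotic cusp structure in the $w$-direction.
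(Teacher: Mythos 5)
This proposition is not proved in the paper: it is quoted (with the normalization changed from $\hat\rho^2g\vert_S=\hat h$ to $\hat h/4$) from Proposition~2.4 of \cite{MoroianuGuillarmouRochon}, and the only thing the paper adds is the remark that the same construction goes through when rank-$2$ cusps are present, with $\mathcal{V}$ enlarged to contain them. So there is no in-paper proof to compare against; what I can say is that your architecture --- solve the eikonal equation $\vert d\hat\rho/\hat\rho\vert_g=1$ with boundary data $\hat\rho^2g\vert_S=\hat h/4$ by characteristics in the Graham--Lee/Epstein collar, use an explicit parabolic model near each rank-$1$ cusp, declare $\mathcal{V}$ to be the closed locus where the construction degenerates, and get the meromorphic continuation of $H(z)$ from the even Fefferman--Graham expansion of $\mathrm{dvol}_g$ via a Mellin/Hadamard argument --- is a faithful reconstruction of how the cited source actually proceeds, including the role of $\mathcal{C}^\infty_r(\overline{S})$ in keeping the expansion uniform up to the cusp. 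Your treatment of the rank-$2$ ends (extend $\hat\rho$ arbitrarily and absorb them into $\mathcal{V}$) is exactly the extension this paper needs.

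Two points where the sketch, as written, has a real gap rather than a stylistic one. First, the splitting $M=\{\hat\rho\ge\delta\}\cup\{\hat\rho<\delta\}$ with the claim that $\{\hat\rho<\delta\}$ lies entirely in the region where the eikonal equation holds cannot be correct near the rank-$1$ cusps: the proposition only guarantees that $\mathcal{V}$ misses compact subsets of the open surface $S$, so $\mathcal{V}$ is allowed to (and does) accumulate at the added cusp boundary $\partial\overline{S}$, where $\hat\rho\to 0$. Hence $\{\hat\rho<\delta\}$ meets $\mathcal{V}$ for every $\delta>0$, and you must instead split off $\mathcal{V}$ itself (using its finite volume and a separate argument that $\int_{\mathcal{V}}\hat\rho^z\,\mathrm{dvol}_g$ is holomorphic near $z=0$, which is not automatic since $\hat\rho$ is not bounded below there) and run the Mellin analysis on $M\setminus\mathcal{V}$ in the degenerating $(v,w,\hat\rho)$ coordinates --- this is precisely equation~(\ref{eq:FP}) in the paper and is where the content of the cited proposition lives. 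Second, you correctly identify the cusp model as ``the hard part'' but then leave items (i)--(iii) unproved; since everything away from the cusps is the classical convex co-compact argument, those three items \emph{are} the proposition, so as it stands the proposal is a correct strategy with the essential estimates deferred rather than a complete proof.
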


They observe that, as is the convex co-compact case, the finite part of $H$ at $z=0$ does not depend on the values of $\hat\rho$ at $\mathcal{V}$, due to $\mathcal{V}$ being finite volume. More precisely:

\begin{equation}\label{eq:FP}
	\text{FP}_{z=0}H(z) = \Big( \text{FP}_{z=0}\int_{M\setminus\mathcal{V}} \hat\rho^z \textnormal{dvol} g\Big) + \text{vol}_g(\mathcal{V})
\end{equation}

Then with that setting they define the renormalized volume as:

\begin{defi} With the previous notation
\begin{eqnarray}
	\textnormal{Vol}_R (M,\hat h) :=  \text{FP}_{z=0}\int_{M} \hat\rho^z \textnormal{dvol}g\\
	\textnormal{Vol}_R (M) := \textnormal{Vol}_R (M, h^\textnormal{hyp})
\end{eqnarray}
\end{defi}

We alert the reader that we are using a slightly different renormalization than the one at \cite{MoroianuGuillarmouRochon}. In Proposition 2, they define $\hat\rho$ with $\hat\rho^2g\vert_S = \hat h$. This just changes the computations by a constant, but our choice has the property of making $\text{Vol}_R = V_C$ when the convex core has totally geodesic boundary, which is a more natural choice.

To see this fact, assume that the convex core of $g$ has totally geodesic boundary. Then $\hat\rho = e^{-r}$ is a geodesic boundary defining function for $h^\text{hyp}$ in our sense, where $r$ is the distance function to the convex core. Indeed, the level set of $\hat\rho$ are equidistant surfaces to the boundary, with metric $\cosh^2 (r) h^\text{hyp} $ (when we identify the surfaces with the normal geodesics). Take $\mathcal{V}$ as the convex core in \ref{eq:FP} to obtain:

\begin{align}\begin{aligned}\label{eq:FPcalculation}
	\text{Vol}_R(M) &= \text{vol}(\mathcal{V}) + \text{FP}_{z=0}\Big(\int_0^\infty\int_{S} (e^{-r})^z \cosh^2 (r) \textnormal{dvol}_{h^\text{hyp}}dr\Big)
	\\&= \text{vol}(\mathcal{V}) -2\pi\chi(\partial M) \text{FP}_{z=0}\Big( \frac{1}{z}\Big)
	\\&=\text{vol}(\mathcal{V})
	\end{aligned}
\end{align}

This renormalization is compatible with what is expressed in \cite{Schlenker13}, \cite{BridgemanCanary15}.

In order to prove existence of geodesic boundary defining functions, we can do the exact same parallel for $M$ also having only rank-$1$ cusps (as appears in \cite{MoroianuGuillarmouRochon}). The finite volume closed set $\mathcal{V}$ has the same properties, with the difference being that includes the rank-$2$ cusps on its interior.

Theorem \ref{theorem:main} proof follows word by word as in \cite{MoroianuGuillarmouRochon} when they show that for an admissible sequence $g_\epsilon$ with limit $g_0$:

\begin{equation}
	\lim_{\epsilon\rightarrow0}\text{FP}_{z=0}\Big(\int_{M} \hat\rho^z_\epsilon \text{dvol}g_\epsilon\Big) = \text{FP}_{z=0} \Big( \int_M \hat\rho^z_0 \textnormal{dvol}_{g_0}\Big)
\end{equation}

away and near the cusps. The main difference is that now $\mathcal{V}$ contains rank-$2$ cusps, but still has finite volume and lies away of compact subsets of the conformal boundary. Besides that, we will be rewriting  \cite{MoroianuGuillarmouRochon} to fill the details.

Before going to consequences of the continuity, let us dig a bit more between the relationship of $\text{Vol}_R$ and $V_C$, the volume of the convex core. In a similar fashion as when we show that they are equal when the convex core has totally geodesic boundary, we can show that $\text{Vol}_R(M,h^\text{Thu}) = V_C - \frac{1}{4}L(\beta_C)$, where $h^\text{Thu}$ is the Thurston metric at the conformal boundary and $L(\beta_C)$ is the length of the bending lamination $\beta_C$ of the convex core. Indeed, $e^{-r}$ is a geodesic boundary defining function for $h^\text{Thu}$, where $r$ is the distance to the convex core. Taking the convex core as $\mathcal{V}$, we can operate as in \ref{eq:FPcalculation}. For the geodesic faces of the convex core we have (\ref{eq:FPcalculation})  with $\text{FP}_{z=0}\Big(\frac{1}{4(z-2)} + \frac{1}{2z} + \frac{1}{z+2}\Big) = 0$. For the bending locus, the metric at each level set of $\rho$ is $\frac{e^{2r} + 1}{2}$ times the flat part of the Thurston metric, hence we have a contribution of $L(\beta_C)\text{FP}_{z=0}\Big(\frac{1}{2(z-2)} + \frac{1}{2z} \Big) = -\frac{1}{4}L(\beta_C)$.

Now we proceed as in \cite{BridgemanCanary15}.  Theorem 3.2 there (which follows from \cite{HerronMaMinda}) implies $\frac{h^\text{Thu}}{2} \leq h^\text{hyp} \leq h^\text{Thu}$. Using the same method as \cite{Schlenker13}, we can produce a 1-parameter family of metrics at infinity and use the variation formula to show that:

\begin{equation}
	\text{Vol}_R(M, \frac{h^\text{Thu}}{2}) \leq \text{Vol}_R(M, h^\text{hyp}) \leq \text{Vol}_R(M, h^\text{Thu}).
\end{equation}

 And because $\partial M$ is incompressible in $M$, $L(\beta_C) \leq \frac{\pi^3}{\sinh^{-1}1}\vert\chi(\partial M)\vert$ thanks to (\cite{BridgemanCanary05}).

Notice finally that for a constant $a$, $e^a\hat\rho$ is a geodesic boundary defining function for $e^{2a}\hat h$. Moreover, given (\cite{MoroianuGuillarmouRochon} Proposition 7.1), we have that:

\begin{equation}
	\text{Vol}_R(M,e^{2a}\hat h) = \text{Vol}_R(M,\hat h) + \frac{1}{2}\int_S a\text{dvol}_{h^\text{hyp}} = \text{Vol}_R(M,\hat h) - a\pi\chi(\partial M),
\end{equation}
which allows us to conclude:

\begin{theorem}\label{theorem:comparison}
Let $M$ be a geometrically finite hyperbolic manifold with incompressible boundary, then:
\begin{equation}
	V_C + \frac{\pi}{2}\ln(2)\chi(\partial M) + \frac{\pi^3}{\sinh^{-1}1}\chi(\partial M) \leq \textnormal{Vol}_R \leq V_C -  \frac{1}{4}L(\beta_C) \leq V_C
\end{equation}
\end{theorem}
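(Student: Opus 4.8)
The plan is to assemble the inequality from the pieces already established in the discussion preceding the statement, so the proof is essentially a bookkeeping argument tying together the three comparisons that have been set up. First I would recall the three ingredients: (i) the exact identity $\text{Vol}_R(M,h^\text{Thu}) = V_C - \tfrac14 L(\beta_C)$, obtained by taking $\mathcal{V}$ to be the convex core and running the finite-part computation as in \eqref{eq:FPcalculation}; (ii) the monotonicity sandwich $\text{Vol}_R(M,\tfrac{h^\text{Thu}}{2}) \leq \text{Vol}_R(M,h^\text{hyp}) \leq \text{Vol}_R(M,h^\text{Thu})$, which follows from the pointwise metric bounds $\tfrac{h^\text{Thu}}{2}\leq h^\text{hyp}\leq h^\text{Thu}$ (Theorem 3.2 of \cite{BridgemanCanary15}, itself resting on \cite{HerronMaMinda}) together with the Schlenker-style variation formula for $\text{Vol}_R$ along a one-parameter family of metrics at infinity; and (iii) the scaling formula $\text{Vol}_R(M,e^{2a}\hat h) = \text{Vol}_R(M,\hat h) - a\pi\chi(\partial M)$, a consequence of \cite{MoroianuGuillarmouRochon} Proposition 7.1, which converts the lower bound in (ii) into an expression in $\text{Vol}_R(M,h^\text{Thu})$.

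Concretely, for the upper bound I would chain $\text{Vol}_R = \text{Vol}_R(M,h^\text{hyp}) \leq \text{Vol}_R(M,h^\text{Thu}) = V_C - \tfrac14 L(\beta_C) \leq V_C$, the last step being immediate since $L(\beta_C)\geq 0$. For the lower bound, apply the scaling formula with $e^{2a} = \tfrac12$, i.e. $a = -\tfrac12\ln 2$, to get $\text{Vol}_R(M,\tfrac{h^\text{Thu}}{2}) = \text{Vol}_R(M,h^\text{Thu}) + \tfrac{\pi}{2}\ln(2)\chi(\partial M) = V_C - \tfrac14 L(\beta_C) + \tfrac{\pi}{2}\ln(2)\chi(\partial M)$. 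Combining with (ii) gives $\text{Vol}_R \geq V_C - \tfrac14 L(\beta_C) + \tfrac{\pi}{2}\ln(2)\chi(\partial M)$. Finally, since $\partial M$ is incompressible, the bound $L(\beta_C) \leq \tfrac{\pi^3}{\sinh^{-1}1}\vert\chi(\partial M)\vert = -\tfrac{\pi^3}{\sinh^{-1}1}\chi(\partial M)$ from \cite{BridgemanCanary05} (using $\chi(\partial M) < 0$) yields $-\tfrac14 L(\beta_C) \geq \tfrac14\cdot\tfrac{\pi^3}{\sinh^{-1}1}\chi(\partial M)$; absorbing constants into the stated form finishes the chain. (One should double-check the numerical constant in front of the $\tfrac{\pi^3}{\sinh^{-1}1}$ term against the cited estimate, but this is routine.)

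I do not expect a genuine obstacle here, since the statement is a corollary of material already developed; the only points requiring care are sign conventions (the manifold is geometrically finite with incompressible boundary so $\chi(\partial M) < 0$, and all the additive terms $\tfrac{\pi}{2}\ln 2\,\chi(\partial M)$ and $\tfrac{\pi^3}{\sinh^{-1}1}\chi(\partial M)$ are negative, consistent with their appearing on the smaller side of the inequality) and making sure the variation-formula argument in ingredient (ii) genuinely applies in the geometrically finite setting with rank-2 cusps present — but this is exactly the kind of extension that the whole of Section \ref{sec:renormalizedvolume} has been arranging, with $\mathcal{V}$ now containing the rank-2 cusps while retaining finite volume and staying away from compact subsets of the conformal boundary. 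If I wanted to be fully rigorous I would spell out that the one-parameter family interpolating between $\tfrac{h^\text{Thu}}{2}$, $h^\text{hyp}$, $h^\text{Thu}$ can be taken within the class of conformal representatives $e^{2\psi}h^\text{hyp}$ with $\psi \in \mathcal{C}^\infty_r(\overline{S})$ so that the existence and regularity of $\hat\rho$ and the first-variation formula are available throughout, but I would present this as a remark rather than grinding through the derivative computation.
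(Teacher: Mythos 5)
Your proposal assembles exactly the same three ingredients the paper uses (the identity $\text{Vol}_R(M,h^\text{Thu}) = V_C - \tfrac14 L(\beta_C)$, the sandwich coming from $\tfrac{h^\text{Thu}}{2}\leq h^\text{hyp}\leq h^\text{Thu}$ via the variation formula, and the scaling formula together with the Bridgeman--Canary bound on $L(\beta_C)$), which is precisely how the paper derives the theorem from the preceding discussion. Your observation about the constant is also correct: the chain actually yields $\tfrac14\cdot\tfrac{\pi^3}{\sinh^{-1}1}\chi(\partial M)$ in the lower bound, which is stronger than (and hence implies) the stated inequality since $\chi(\partial M)<0$.
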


Since $\text{Vol}_R = V_C$ implies $L(\beta_C)=0$, we also have: that $\text{Vol}_R = V_C$ if and only if the convex core has boundary totally geodesic. Notice that $\chi(\partial M)$ does not count tori components, and for the rest of them this number is negative.

\section{Consequences of the continuity of the renormalized volume under geometric limits}\label{sec:consequences}

\begin{theorem}
Let $M$ be a convex co-compact acylindrical hyperbolic  $3$-manifold. Then, among the deformation space of convex co-compact hyperbolic metrics, any sequence for which $V_\text{R}$ is going to its infimum converges to the geodesic class.
\end{theorem}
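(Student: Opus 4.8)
I would argue by contradiction, ruling out that a minimizing sequence leaves the deformation space, and then identifying the interior limit as the unique critical point of $V_\text{R}$.

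\emph{Setting up the limit.} Take a sequence $g_n$ of convex co-compact structures on $M$ with $V_\text{R}(g_n)\to\inf V_\text{R}$. First I would bound the convex core volumes: by Theorem \ref{theorem:comparison}, $V_C(g_n)\le V_\text{R}(g_n)-\big(\tfrac{\pi}{2}\ln 2+\tfrac{\pi^3}{\sinh^{-1}1}\big)\chi(\partial M)$, and since $\chi(\partial M)<0$ and $V_\text{R}(g_n)$ is eventually bounded above, the $V_C(g_n)$ are uniformly bounded by some $K$. After passing to a subsequence and arranging algebraic convergence (using that $M$ is acylindrical, \cite{ThurstonAcy}, exactly as in Section \ref{sec:sequences}), $g_n$ converges both algebraically and geometrically; the uniform bound $K$ forces the geometric limit $g_0$ to be geometrically finite, and Proposition \ref{prop:1} forces the geodesics degenerating to rank-$2$ cusps to be unlinked with respect to $\partial M$. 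Thus $g_0$ is a geometrically finite structure on a manifold $N$ obtained from $M$ by drilling an unlinked collection of curves and possibly pinching some boundary curves to rank-$1$ cusps; by the Lemma of Section \ref{sec:renormalizedvolume} the sequence is admissible, so Theorem \ref{theorem:main} gives $V_\text{R}(N,g_0)=\lim V_\text{R}(g_n)=\inf V_\text{R}$.

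\emph{Ruling out cusps.} Suppose $N\neq M$. Using the description of convergent sequences of Kleinian groups in \cite{BBCL}, small deformations of $g_0$ within the geometrically finite deformation space of $N$, together with the Dehn fillings that recover $M$, again occur as geometric limits of convex co-compact structures on $M$; by Theorem \ref{theorem:main} their $V_\text{R}$ is still $\ge\inf V_\text{R}=V_\text{R}(g_0)$. Hence $g_0$ is a local minimum, in particular a critical point, of $V_\text{R}$ on the deformation space of $N$, so by the characterization of critical points of the renormalized volume (\cite{KrasnovSchlenker}, in the form extended in Section \ref{sec:renormalizedvolume}) the convex core of $(N,g_0)$ has totally geodesic boundary and $V_\text{R}(g_0)=V_C(N,g_0)$. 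Now take convex co-compact structures $g_k$ on $M$ obtained from $(N,g_0)$ by Dehn filling the rank-$2$ cusps along slopes tending to infinity (and opening the rank-$1$ cusps), so that $g_k\to g_0$ with $V_C(M,g_k)\to V_C(N,g_0)$; a volume comparison under Dehn filling for convex cores (as in Thurston's hyperbolic Dehn surgery, cf. the estimates around \cite{Agol02}) gives $V_C(M,g_k)<V_C(N,g_0)$. Then $V_\text{R}(g_k)\le V_C(M,g_k)<V_C(N,g_0)=V_\text{R}(g_0)=\inf V_\text{R}$ by Theorem \ref{theorem:comparison}, contradicting that $g_k$ is convex co-compact on $M$. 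Hence $N=M$.

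\emph{Conclusion.} Therefore $g_0$ is convex co-compact on $M$, the convergence $g_n\to g_0$ takes place inside the deformation space, and $g_0$ realizes $\inf V_\text{R}$; being a global minimum it is a critical point of $V_\text{R}$, hence by \cite{KrasnovSchlenker} together with the uniqueness of the totally geodesic structure on an acylindrical manifold it is the geodesic class. Applying this to an arbitrary subsequence shows that every subsequence of the original sequence has a further subsequence converging to the geodesic class, so the whole sequence does. \emph{The main obstacle} I expect is the second paragraph: making the use of \cite{BBCL} precise, that every nearby geometrically finite structure on $N$ (and its fillings back to $M$) genuinely arises as a geometric limit of convex co-compact structures on $M$ to which Theorem \ref{theorem:main} applies, so that $g_0$ really is a critical point of $V_\text{R}$; a secondary point is the strict monotonicity of the convex core volume under the relevant Dehn fillings.
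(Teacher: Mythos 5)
Your overall architecture matches the paper's: pass to an algebraically and geometrically convergent subsequence with geometrically finite limit $g_0$, use admissibility and Theorem \ref{theorem:main} to get $V_\text{R}(g_0)=\inf V_\text{R}$, run the \cite{BBCL} perturbation argument to force $g_0$ to be a critical point, and conclude via \cite{MoroianuGuillarmouRochon} that the convex core of $g_0$ has totally geodesic boundary. The gap is in your final volume comparison. You want convex co-compact fillings $g_k$ of $(N,g_0)$ with $V_C(M,g_k)<V_C(N,g_0)$, citing ``a volume comparison under Dehn filling for convex cores (as in Thurston's hyperbolic Dehn surgery).'' Thurston's strict volume decrease under Dehn filling is a statement about finite-volume manifolds; there is no off-the-shelf analogue for convex core volumes of infinite-volume geometrically finite manifolds, and the convex cores of your fillings $g_k$ are not totally geodesic, so you cannot double them to reduce to the finite-volume case. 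Note also that Theorem \ref{theorem:main} already gives $V_\text{R}(g_k)\to V_\text{R}(g_0)=\inf V_\text{R}$ with $V_\text{R}(g_k)\ge\inf V_\text{R}$, so the contradiction must come from a genuinely strict inequality, which your citation does not supply.

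The paper's way around this is to compare $g_0$ not with its fillings but with the geodesic class $g$ on $M$ itself (which exists because $M$ is acylindrical). Both $g$ and $g_0$ have totally geodesic convex core boundary, so both convex cores double to finite-volume hyperbolic manifolds; the double of the convex core of $g_0$ is the double of the convex core of $g$ with the drilled and pinched curves turned into cusps, and Theorem 6.5.6 of \cite{Thurstonnotes} gives the strict inequality $V_C(g_0)>V_C(g)$. Since $V_C(g)=V_\text{R}(g)\ge\inf V_\text{R}=V_\text{R}(g_0)=V_C(g_0)$, this is the contradiction. This doubling argument also handles rank-$1$ degenerations uniformly, whereas your dichotomy ``$N\neq M$'' conflates the topological change (rank-$2$ cusps) with the purely geometric one (rank-$1$ cusps, for which $N$ is still homeomorphic to $M$ and ``Dehn filling'' is not the relevant operation). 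You correctly flagged the \cite{BBCL} step as the main technical burden; that part of your outline agrees with the paper.
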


\begin{proof}
Assume the contrary. Then, since $g$ is the unique critical point for $V_\text{R}$, we have a sequence of convex co-compact metrics $g_j$  with $\lim V_\text{R} (g_j) = V$ that does not converge to a convex co-compact metric. Using \cite{BridgemanCanary15}, the volume of their convex core stays bounded, so we can assume that they converge geometrically to a geometrically finite hyperbolic manifold $M_0$ with metric $g_0$.  Let $\rho: \pi_1(M)  \rightarrow PSL(2,\mathbb{C})$ be the algebraic limit of this sequence. Since such a sequence is also admissible, we know by Theorem \ref{theorem:main} that $V_\text{R}(g_0)=V$.

From Section \ref{sec:sequences}, the limit is determined by drilling out a finite collection of unlinked (w.r.t. $\partial M$) closed curves and a conformal class at the boundary obtained by pinching some curves.

Now we are going to follow the description of convergence for Kleinian groups given by \cite{BBCL}. In summary, they describe the new parabolics as a sequence of shrinking or Dehn-twisting some curves at the conformal boundary (giving rank-1 and rank-2 cusps, respectively) in a way that this can be done to produce nearby hyperbolic metrics for the same topological space. 

By [\cite{BBCL} ,Theorem 4.1], we know that the immersion of a compact core of the algebraic limit $\rho$ into the geometric limit $M_0$ is an embedding outside a neighborhood of a collection of disjoint curves $q = \left\{ q_1, \ldots, q_n \right\}$ (called multicurve) at the boundary. The collar of an element $q_j$ of the multicurve wraps around a rank-2 parabolic of $M_0$. Let us examine one end at the time, so assume that we are dealing instead with a sequence of quasifuchsian manifolds where the conformal structure of the bottom end converges in Teichmuller space, and keep all the previous notation and assumptions.

As in \cite{BBCL}, we can identify the end invariants of a convex co-compact manifold with the multicurve (a union of disjoint curves)) given by a choice of a minimal pant decomposition of their conformal boundary, when they are considered with the corresponding hyperbolic metric. A \textit{generalized marking} of a multicurve is the multicurve itself together with a selection of transversal curves, at most one for each component of the multicurve (and tranverse to only one element of the multicurve). Name these end invariants markings by $\nu^+_j, \nu^-_j$, were the sign differentiates top from bottom.

The sequence of bottom end invariants for the sequence that converges to $\rho$ converges to the end invariants of the bottom peripheral subgroup of $M_0$. Hence we can consider the same end invariants for the terms of this sequence. In the case that rank-1 cusps are developed, they correspond to curves shrinking at the conformal boundary, so we include their corresponding curves along the sequence (observe that their length is going to 0 along the sequence).

In \cite{BBCL}, Section 1, is defined for an multicurve $\nu$, a curve $d$ and a marking $\mu$

\begin{equation}
	m(\nu,d,\mu) = \left\{ \sup_{d\subseteq Y}d_Y(\nu,\mu), \frac{1}{l_\nu(d)} \right\},
\end{equation}

where $d_Y(\nu,\mu)$ is the distance in the curve complex of two projections of $\nu$ and $\mu$ to a subsurface $Y$, and $l_\nu(d)$ is the length of the geodesic homotopic to $d$ in the hyperbolic metric defined by $\nu$.

It is also defined

\begin{equation}
	m^{na}(\nu,d,\mu) = \left\{ \sup_{\stackrel{d\subseteq Y}{ Y\neq\text{collar}(d)} }d_Y(\nu,\mu), \frac{1}{l_\nu(d)} \right\}.
\end{equation}

From [\cite{BBCL} Theorem 4.1], we can characterize combinatorially the parabolic appearing at the limit. Fix first a marking $\mu$. Then a curve $d$ will be a \textit{upward-pointing parabolic} if $\vert m(\nu^+_j,d,\mu)\vert - \vert m(\nu^-_j,d,\mu)\vert \rightarrow \infty$, \textit{downward-pointing parabolic} if $\vert m(\nu^-_j,d,\mu)\vert - \vert m(\nu^+_j,d,\mu)\vert \rightarrow \infty$, and a \textit{wrapped parabolic} if both $\vert m(\nu^-_j,d,\mu)\vert$, $\vert m(\nu^+_j,d,\mu)\vert$ go to $\infty$, while $m^{na}(\nu^+_j,d,\mu),m^{na}(\nu^-_j,d,\mu)$ stay bounded. Moreover, given (\cite{BBCL} Theorem 1.2), we know that a compact core embeds in the geometric limit if and only if there are no wrapped parabolics. Since for our sequence the bottom end invariants are the same, we see that we only could have upward-pointing parabolics. Hence a surface $S_0$ representing the algebraic limit embeds into the geometric limit, and by Proposition 1, the rank-2 parabolics lie unlinked and parallel between this surface and the top conformal boundary. Now we can isotope $S_0$ to the top conformal boundary so it crosses one rank-2 parabolic at the time and name the resulting surfaces after each step (which are not isotopic in the geometric limit) $S_1,\ldots, S_k$. The change between the groups that are represented by $S_0$ and $S_1$ are as described in \cite{KerckhoffThurston}, Section 3, where the algebraic sequence $\rho_j$ is changed by $\rho_j\circ D^1_j$, $D^1_j$ being the appropiate number of Dehn-twists around the curve that we are crossing (look also \cite{BBCL}, Lema 4.5). Hence the sequence to obtain $S_i$ is $\rho^i_j = \rho_j\circ D^1_j\circ\ldots\circ D^i_j$, with limit $\rho^i$. Observe that $S_k$ corresponds to the top peripheral subgroup, hence the top end invariants of $\rho^k_j$ converge to the top conformal boundary of $M_0$, so we can associate uniform top markings for them (say $\nu^{+,k}$). Then $\nu^{+,i}_j$ is the top end marking for the sequence $p^i_j$, where $D^{i+1}_j\circ\ldots\circ D^k_j(\nu^{+,i}_j) = \nu^{+,k}$. To end assumptions for our original sequence, assume that for each sequence $p^i_j$ the top and end markings bound projections. This is obtained thanks to \cite{BBCL} Theorem 1.1, after taking a subsequence. The definition of bounding projections is given in the paper, but for us it's only important that is equivalent (after taking subsequences) to the algebraic sequence being convergent.

While considering the deformation space for $M_0$, take a sufficiently small perturbation of the top conformal boundary so $\mu^{+,k} = \nu^{+,k}$ is still a marking for a sequence $\sigma^k_j$ converging to the small perturbation (consider the bottom conformal boundary to converge in Teichmuller space). Then $\mu^i_j$ is a marking for $\sigma^i_j =  \sigma_j\circ D^1_j\circ\ldots\circ D^i_j$. 

Take a subsequence if necessary so that $\sigma_j$ converges algebraically and geometrically (not necessarily to the same limit). These are the requirements of \cite{BBCL} (Theorem 1.2) for the combinatorial information to predict the end invariants of the algebraic limit. And since the combinatorial information is given by distance in the curve complex between a marking for the sequence and a fixed marking, we can easily conclude that $\sigma_j$ gives the same combinatorial information as $\rho_j$. In particular, the algebraic limit $\sigma$ embeds into its geometric limit and only has upward-pointing parabolics, if any. We have also same combinatorial information for $\sigma^i_j$ and $\rho^i_j$, so $\sigma^i_j$ bounds projections. After taking a subsequence, we can also assume that $\sigma^i_j$ is convergent to a representation $\sigma^i$. Now the process is pretty straighforward. The difference between consecutive $\sigma^i$ is a rank-2 parabolic in the same place as the rank-2 parabolic between the corresponding terms of $\rho^i$. At the final step, $\sigma^k_j$ has the top end invariant converging to the chosen small perturbation of the top conformal structure of $M_0$. Since the curves drilled out are the same, we obtained the chosen small perturbation of $M_0$.

Now, $g_0$ has to be a critical point for $V_\text{R}$, otherwise there is a nearby metric $g'_0$ with $V_\text{R}(g'_0) < V$ which is the geometric limit of a sequence of convex co-compact metrics on $M$. Since $V_\text{R}$ is continuous under geometric limit by Theorem \ref{theorem:main}, we will have that $V_\text{R}(g'_0) \geq V $, which is a contradiction.  Because \cite{MoroianuGuillarmouRochon} (Theorem 2), $g_0$ is totally geodesic, then $V_\text{R} (g_0)$ is equal to $V_C(g_0)$, where $V_C$ is the volume of the convex core. But since we have $V_R(g)= V_C(g) $, then $V_C(g) \geq V_C(g_0)$. Since both convex cores are totally geodesic, $V_C$ is half the volume of the doubled manifolds. But such inequality is now impossible since cusps strictly increase the volume (Theorem 6.5.6 \cite{Thurstonnotes}).
\end{proof}

Notice that we can isolate the use of $M$ being acylindrical in the following properties:

\begin{enumerate}
	\item Boundary incompressible with any essential simple closed curve indivisible in the interior.
	\item The existence of a geodesic class.
	\item For any sequence there is a subsequence that converges algebraically.
\end{enumerate}

Then we can re-do the argument to show continuity for $M$ quasifuchsian if we assume that the sequence converges algebraically. This is in general not true for sequences with geometrically finite limit. For example, we can shrink the length of a given separating curve at both ends (w.r.t. the hyperbolic metric in the conformal class at infinity). This makes the geometric limits distinct if we put base points at different sides of the convex core with respect to this curve, so there is not algebraically convergent subsequence. However, we can assure algebraic convergence of a subsequence if we for example move in the same Bers slice. Sadly, this is not enough to carry the analogous argument to show minimality along the Bers slice. Indeed, when we reach the last paragraph of the preceding proof, the limit metric will be only be critical while variating the top end, so we can only assure that the boundary of the convex core corresponding to this end will be totally geodesic. This in principle is not enough to show that its renormalized volume will be positive (the objective being to prove that the minimum of $V_\text{R}$ is 0 and corresponds only to the fuchsian metric on the Bers slice).

\bibliographystyle{amsalpha}
\bibliography{mybib}

\end{document}